\documentclass[a4paper,10pt,reqno, english]{amsart}

\usepackage{amsmath,amssymb,amscd,amsthm,amsfonts}
\usepackage{graphicx,subfigure}
\usepackage{hyperref}
\usepackage{dsfont}
\usepackage[nobysame, alphabetic]{amsrefs}
\usepackage{tikz}
\usepackage[capitalise]{cleveref}
\usepackage{mathrsfs}
\usepackage{booktabs,array}
\usepackage{microtype}
\usepackage{enumitem}

\newtheorem{theorem}{Theorem}
\newtheorem{lemma}{Lemma}

\newtheorem{definition}{Definition}

\def\rr{\mathds{R}}
\def\hh{\mathds{H}}

\DeclareMathOperator{\sgn}{sgn}

\DeclareMathOperator{\Spin}{Spin}
\DeclareMathOperator{\conv}{conv}

\DeclareMathOperator{\Sym}{Sym}

\title{Four hyperplanes do not always equipartition a mass in $\mathbb{R}^4$}

\hypersetup{
  pdftitle={Four hyperplanes do not always equipartition a mass in $R^4$},
  pdfauthor={Pablo Soberon}
}

\author[Sober\'on]{Pablo Sober\'on}\address{Baruch College \& The Graduate Center, City University of New York, One Bernard Baruch Way, New York, NY 10010, United States} 
\email{psoberon@gc.cuny.edu}

\thanks{
The research of P. Sober\'on is supported by NSF CAREER grant DMS-2237324 and a PSC-CUNY Track 1 award.}

\keywords{Hyperplane mass partitions; Grünbaum–Hadwiger–Ramos problem;
Ramos conjecture; Gaussian perturbations; polynomial nonvanishing;
Bernstein coefficients; subdivision certificates; computer-assisted proof.}

\subjclass[2020]{Primary 52C35; Secondary 52A37, 68V05}

\begin{document}

\begin{abstract}
We construct a smooth strictly positive density in $\mathbb{R}^4$ that cannot be divided into $16$ parts of the same size by four affine hyperplanes.   This settles the last open case of Gr\"unbaum's 1960 conjecture and disproves Ramos' general conjecture on hyperplane equipartitions.

We reduce the construction to finding two homogeneous polynomials in four variables, of degrees three and four, whose multilinear coefficients cannot vanish simultaneously after any orthogonal change of coordinates. We give two proofs of this nonvanishing result.  The first uses a local perturbation argument.  The second reduces it to the absence of a common zero for five explicit polynomials on \([-1,1]^6\), verified by a computer-assisted Bernstein subdivision argument.
\end{abstract}

\maketitle

\section{Introduction}

Mass partition problems are a central topic in topological combinatorics.  The general goal is, given rules on how we are allowed to split an ambient space, usually $\rr^d$, to guarantee the existence of an equipartition of a set of masses, meaning that all resulting parts have equal measure with respect to each mass.  A \textit{mass} in $\rr^d$ is a finite Borel measure that vanishes on every affine hyperplane.  The quintessential example is the ham sandwich theorem, which states that \textit{for any $d$ masses in $\rr^d$ there exists a hyperplane simultaneously halving all of them}.  This was proved in dimension three by Banach using the Borsuk--Ulam theorem and in general by Stone and Tukey \cites{Steinhaus1938, Stone1942}.  Mass partition problems provide compelling motivation for the use of topological tools, and have deep connections to computational geometry, discrete geometry, and mathematical economics \cites{Matousek2003, RoldanPensado2022}.

A classic problem in the area is Gr\"unbaum's 1960 conjecture \cite{Grunbaum1960}*{Rmk. (v)}.  Gr\"unbaum conjectured that every mass in $\rr^d$ could be partitioned into $2^d$ parts of equal size by $d$ hyperplanes.  Hadwiger proved the conjecture for $d\le 3$ \cite{Hadwiger1966}, while Avis constructed counterexamples for $d \ge 5$ \cite{Avis1984}.  Avis' counterexamples rely on concentrating a mass around the moment curve in $\rr^d$.  The case $d=4$ resisted both approaches.  Gr\"unbaum's conjecture has had multiple extensions and related problems, and has motivated significant development in topological combinatorics and computational geometry \cites{Yao1989, Ramos1996, ManiLevitska2006, Blagojevic2016}.  The lucid survey by Blagojevi\'c, Frick, Haase, and Ziegler collects the history and progress around this problem \cite{Blagojevic2018a}.  More recently, results around Gr\"unbaum's conjecture include additional conditions \cites{Maldonado2025, Aronov2026, Manta2024}, such as orthogonality, or generalizations to mass assignments \cite{Blagojevic2023, Blagojevic2025a}.  Gr\"unbaum's conjecture has been called one of the most famous open problems in topological combinatorics \cites{Simon2019, fakhrutdinov2025algorithms}.  Our main result is the following.

\begin{theorem}\label{thm:main}
    There exists a smooth strictly positive density in $\rr^4$ that admits no equipartition by four hyperplanes into $16$ parts of equal size.
\end{theorem}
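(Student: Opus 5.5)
The density will be a small perturbation of the standard Gaussian, $f_\varepsilon = \gamma\,(1+\varepsilon p_3+\varepsilon^2 p_4+\dots)$, where $\gamma$ is the standard Gaussian density in $\rr^4$ and $p_3,p_4$ are homogeneous polynomials of degrees three and four. At $\varepsilon=0$ the equipartitions of $\gamma$ are well understood: by rotation invariance and symmetry they are exactly the four mutually orthogonal hyperplanes through the origin. Up to labelling and orientation, this family is parametrized by $O(4)$.

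The plan is a bifurcation/perturbation argument. Parametrize nearby configurations by an orthogonal frame $Q\in O(4)$, small translations $t\in\rr^4$, and small tilts of the normals. Write the $15$ equipartition conditions as the vanishing of the signed-sum functionals
\[
F_S(\mathcal H)=\int \prod_{i\in S}\sgn(\langle v_i,x\rangle-t_i)\, f_\varepsilon(x)\,dx, \qquad \emptyset\ne S\subseteq\{1,2,3,4\}.
\]
At the Gaussian, the linearization in the transversal directions (translations and the off-diagonal tilts, which is $4+6=10$ parameters) is invertible onto the $4+6$ conditions with $|S|\in\{1,2\}$. This should follow from explicit Gaussian integrals: $F_{\{i\}}$ responds to $t_i$, and $F_{\{i,j\}}$ responds to the angle between $v_i$ and $v_j$. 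By the implicit function theorem, for small $\varepsilon$ these ten conditions can be solved uniquely, leaving only the five conditions $|S|=3,4$ as functions on $O(4)$.

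Next, expand the remaining conditions in $\varepsilon$. Expanding the sign functions in Hermite polynomials in the frame coordinates, the integral of $\prod_{i\in S}\sgn(y_i)$ against $\gamma\, y^\alpha$ is nonzero only when each $i\in S$ appears with odd degree. The leading terms are then as follows. The $|S|=3$ conditions are governed by the coefficients of the multilinear monomials $y_iy_jy_k$ of $p_3\circ Q$. The $|S|=4$ condition is governed by the coefficient of $y_1y_2y_3y_4$ of $p_4\circ Q$, corrected by quadratic contributions from $p_3$ through the implicit-function solution. This correction can be absorbed by redefining $p_4$, or made to vanish by scaling the orders suitably, e.g. using $\varepsilon p_3+\varepsilon^{2}p_4$ with matched normalizations.

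The conclusion is that if there are $p_3,p_4$ such that the multilinear coefficients ($4$ from the cubic, $1$ from the quartic) never vanish simultaneously for any $Q\in O(4)$, then by compactness of $O(4)$ their common lower bound dominates the error terms. Hence for small $\varepsilon$ no equipartition exists near the orthogonal family. Configurations far from it are excluded by continuity, since they are not equipartitions of $\gamma$ and the relevant set is compact modulo the behaviour at infinity, which a Gaussian tail controls. Positivity and smoothness of $f_\varepsilon$ are arranged by multiplying by a cutoff, or by using $\gamma\exp(\varepsilon p_3+\varepsilon^2 p_4 - \delta|x|^6)$.

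The main obstacle is the algebraic nonvanishing statement. Five conditions on the $6$-dimensional $SO(4)$ would generically have no common zero, but this must be certified. I would first try a local argument: choose $p_3$ so that its multilinear coefficients vanish on a controlled, small set, and then pick $p_4$ nonzero there. Failing that, I would parametrize $SO(4)$ via unit quaternion pairs or Cayley coordinates on $[-1,1]^6$ and certify the absence of common zeros by a Bernstein-subdivision computation.
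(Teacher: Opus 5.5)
Your reduction is essentially the paper's: Walsh coefficients, the implicit function theorem for the ten singleton and pair conditions, multilinear coefficients as the leading terms of the remaining five, and compactness away from the Gaussian solution set. But the proposal has a genuine gap exactly where the difficulty lies. You never establish the nonvanishing statement (\cref{thm:vanishing-tool}), which is the real content of the theorem. No $p_3,p_4$ are exhibited, and ``I would first try \dots, failing that \dots'' is a search strategy, not a proof.

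Your heuristic that five conditions on the $6$-dimensional $SO(4)$ ``generically have no common zero'' is also backwards. Dimension counting predicts a curve of common zeros, not emptiness (this is the paper's own heuristic), so nonvanishing has to be engineered. Moreover, $B(Qh)=\chi(h)B(Q)$ for signed permutation matrices $h$. Hence no quartic can work if some component of the zero set of the four cubic conditions joins $Q$ to $Qh$ with $\chi(h)=-1$, so ``pick $p_4$ nonzero there'' is not automatic.

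The paper's analytic proof also does not follow your local sketch. It takes $P_{A_0}=x_0\,x^TMx$ and $P_B=(x^TNx)^2$, where $M$ has no eigenvector orthogonal to $e_0$ and its compression to $e_0^\perp$ has distinct eigenvalues. A case analysis on how many $c_i=\langle e_0,q_i\rangle$ vanish shows that the common zero set of all five maps is exactly the group of signed permutation matrices, and $B$ itself vanishes there. These zeros are then destroyed by perturbing the \emph{cubic}: $A_\tau=A_0+\tau C$ with $P_C=6x_1x_2x_3$. In exponential coordinates near $I$, solving three cubic equations and the quartic one leaves $3F_{\tau,0}=3\tau+y^2+z^2+(y+z)^2+O(\|(y,z)\|^3+\tau\|(y,z)\|)$, which cannot vanish for small $\tau>0$. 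The second proof instead certifies a specific pair by Bernstein subdivision on $[-1,1]^6$.

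There is also a flaw in your order bookkeeping. With $\varepsilon p_3+\varepsilon^2p_4$, the triple conditions are of order $\varepsilon$, while the fourfold condition is $\varepsilon^2\bigl(c_4B(Q)+K(Q)\bigr)+O(\varepsilon^3)$, where $K$ comes from $p_3$ through the first-order offsets. Neither remedy you offer works as stated. You give no reason why $K$ should have the form $B'(q_0,\dots,q_3)$, and ``scaling suitably, e.g.\ $\varepsilon p_3+\varepsilon^2p_4$'' is the very scaling that produced $K$. Your closing step, ``common lower bound dominates the error terms,'' also does not apply to conditions living at different orders.

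The clean fix is the paper's: put both polynomials at order $\varepsilon$. By parity at $t=0$, odd $p_3$ integrates to zero against the even product $S_0S_1S_2S_3$, and even $p_4$ integrates to zero against odd triple products. So the leading term is exactly $\varepsilon\,(c_{3,\eta}A(\cdot),c_{4,\eta}B(\cdot))$. Since $D_zY_0(Q,0)=0$ and $z_\varepsilon=O(\varepsilon)$, the implicit-function correction is only $O(\varepsilon^2)$. Your scaling can in fact be rescued: $K(Q)$ turns out to be a $Q$-dependent combination of the triple coefficients $A(q_j,q_k,q_l)$, hence $O(\varepsilon)$ wherever the triple conditions vanish. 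But that needs a two-scale argument you did not give.
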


Positive results in dimension four were previously known under additional hypotheses \cite{DimitrijevicBlagojevic2009} (see also \cite{Zivaljevic2008} and the follow-up discussion in \cite{Blagojevic2018a}).

If we denote by $\Delta(j,k)$ the minimum dimension in which any $j$ masses can be equipartitioned into $2^k$ sets by $k$ hyperplanes, our result shows that $\Delta(1,4) = 5$.  \cref{thm:main} is the statement $\Delta(1,4) > 4$, and the bound $\Delta(1,4) \le \Delta(2,3) =5$ settles the value \cite{Blagojevic2016}.  Ramos conjectured that $\Delta(j,k) = \left\lceil \frac{2^k-1}{k}\cdot j\right\rceil$ \cite{Ramos1996}.  \cref{thm:main} also disproves Ramos' conjecture.

\subsection{Main ideas}

The main goal is to construct the density function.  The construction of the density is based on a small perturbation of the density of a centered normal distribution.  Let $\gamma$ denote the standard centered Gaussian probability measure on $\rr^4$.  For $\varepsilon >0$, we define $\mu=\mu_{\varepsilon}$ by 
\[
d{\mu} (x) = \Big(1+\varepsilon \cdot \eta(\|x\|^2) \big(P_A(x)+ P_B(x)\big)\Big)d{\gamma}(x),
\]

where $P_A$ is a homogeneous polynomial of degree $3$  and $P_B$ is a homogeneous polynomial of degree $4$.  The term $\eta$ is a radial cutoff, so the perturbation is only done close to the origin.  The function $\eta$ is simply an element of $C^{\infty}([0,\infty))$ such that $\eta \ge 0$ and for some $R>0$ we have
\[
\eta(r) = \begin{cases}
    1 & \mbox{ if }r < R \\
    0 & \mbox{ if }r> R+1
\end{cases}
\]
This is just so that we can guarantee that, provided $\varepsilon$ is small enough, the density $d\mu$ is always positive.  The two polynomials $P_A$ and $P_B$ are the key pieces of the construction.

If $H_1,\dots, H_4$ are hyperplanes in $\rr^4$ that equipartition $\gamma$, then they must all go through the origin and their four unit vectors $q_1,\dots, q_4$ are orthonormal.  In other words, $(q_1,\dots, q_4) \in O(4)$.  Therefore, if $\mu$ is sufficiently similar to $\gamma$, then any four hyperplanes that equipartition it must be close enough to the origin and their normal vectors must be almost orthogonal.  This will allow us to parametrize the candidates for equipartitions by $O(4)$.  To find the correct perturbation, we will prove and use the following theorem.  We denote the space of symmetric $n$-linear forms in $\rr^4$ by $\Sym^n((\rr^4)^*)$.  We call the elements in $\Sym^n((\rr^4)^*)$ tensors.  There is a unique correspondence between homogeneous polynomials of degree $n$ and tensors in $\Sym^n((\rr^4)^*)$.  In particular, for $A \in \Sym^3((\rr^4)^*)$ and $B \in \Sym^4((\rr^4)^*)$ we define their polynomials $P_A(x) = A(x,x,x)$ and $P_B(x) = B(x,x,x,x)$.

\begin{theorem}\label{thm:vanishing-tool}
    There exist a cubic tensor $A \in \Sym^3 ((\rr^4)^*)$ and a quartic tensor $B \in \Sym^4 ((\rr^4)^*)$ such that the map
    \[
    (q_0,q_1,q_2,q_3) \longmapsto \begin{bmatrix}
       A(q_1,q_2,q_3) \\
       A(q_0,q_2,q_3) \\
       A(q_0,q_1,q_3) \\
       A(q_0,q_1,q_2) \\
       B(q_0,q_1,q_2,q_3)
    \end{bmatrix}
    \]
    does not have a zero on $O(4)$.
\end{theorem}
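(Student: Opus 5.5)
Dimension count first. $O(4)$ is $6$-dimensional and the map has $5$ components, so a generic zero set is $1$-dimensional. Generic nonvanishing is therefore not automatic. We need a topological or geometric reason, or else an explicit certificate. Several symmetries help. Each component is multilinear, so replacing $q_i$ by $-q_i$ only flips signs. Permuting the columns only permutes and signs the components. The zero set is thus invariant under the hyperoctahedral group acting on the right, and it is enough to work on a fundamental domain, e.g.\ $SO(4)$ modulo signed permutations. I plan to take both the first (local perturbation) route and the second (certificate) route mentioned in the abstract. Both start from a highly symmetric seed pair $(A_0,B_0)$.

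\textbf{Step 1: choose a seed.} Pick a simple $B_0$ such that $B_0(q_0,q_1,q_2,q_3)$ is a nonzero function of the frame. A natural candidate is related to the determinant, but a symmetric tensor cannot be alternating. So instead I would take $B_0 = \sum_i x_i^4$, or a combination with $(\sum x_i^2)^2$. For this choice $B_0(q_0,\dots,q_3)=\sum_k \prod_j (q_j)_k$. I would then determine the variety $Z_0=\{Q\in O(4): B_0(Q)=0\}$, which is a hypersurface in $O(4)$. Next I choose $A_0$ so that the four cubic components have only finitely many common zeros on $O(4)$, or a small explicit set, for example $A_0 = x_1x_2x_3 + \dots$ built from elementary symmetric monomials. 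The aim is that the common zero set of all five functions is empty, or is degenerate in a controlled way.

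\textbf{Step 2, local perturbation route.} If the seed has common zeros, they should form a manageable family (orbits of the symmetry group) at which the five functions vanish. Near each such zero $Q^*$, perturb to $A=A_0+tA_1$ and $B=B_0+tB_1$. To first order, the zero set of the perturbed system near $Q^*$ is governed by the linearization. This is the differential of the five functions along the $6$-dimensional tangent space of $O(4)$, together with the $t$-derivative. The goal is to choose $(A_1,B_1)$ so that the first-order equations have no solution. Concretely, the vector $\partial_t F(Q^*)$ should lie outside the image of $dF_{Q^*}$, which can happen when that image has rank $\le 4$. Away from these finitely many symmetry orbits, compactness together with nonvanishing at $t=0$ gives nonvanishing for small $t$.

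\textbf{Step 3, certificate route (fallback).} Parametrize $SO(4)$ by a pair of unit quaternions via $\Spin(4)=S^3\times S^3$, or by the Cayley transform on a chart, so that the fundamental domain is covered by $[-1,1]^6$. The five components then become explicit polynomials in six variables. Nonvanishing is proved by showing that the sum of squares $\sum F_i^2$ is bounded below by a positive constant, or more cheaply by Bernstein subdivision: on each box, some $F_i$ has Bernstein coefficients of one strict sign. The other component $\det Q=-1$ is handled by the same argument applied to $Q\cdot\mathrm{diag}(-1,1,1,1)$, which changes only signs.

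\textbf{Main obstacle.} The hardest step is Step 2: making sure that the seed's zero set is small and nondegenerate enough for a first-order perturbation to clear it. One reason for difficulty is topological. For a $5$-dimensional system on a $6$-manifold there is no Euler-class obstruction forcing zeros, but the degeneracy at the symmetric points can be high. There I would first attempt second-order analysis with randomly chosen $(A_1,B_1)$, checked numerically, before resorting to the full subdivision certificate.
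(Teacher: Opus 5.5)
Your proposal is a strategy rather than a proof, and the gap is exactly where the content of the theorem lies. The statement is an existence claim, so you must exhibit a pair $(A,B)$ and verify it. You only float candidates ($P_{B_0}=\sum x_i^4$, an $A_0$ ``built from elementary symmetric monomials'') and never analyze any of their zero sets. (Adding $(\sum x_i^2)^2$ to $B_0$ does nothing, since its tensor vanishes identically on orthonormal frames.)

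Your Step 1 goal, that the four cubic components alone have finitely many common zeros, is neither justified nor what is needed. Four equations on the $6$-dimensional $O(4)$ generically cut out a surface. The missing idea is a seed whose \emph{full} five-component zero set is a single finite orbit of the signed permutation group $\mathcal{G}$; this is highly non-generic, since one expects a curve. The paper gets it from $P_{A_0}=x_0\,x^TMx$ and $P_B=(x^TNx)^2$ with $N=M+e_0e_0^T$. Here $M$ has no eigenvector orthogonal to $e_0$, and its compression to $e_0^\perp$ is diagonal with distinct entries $1,2,4$. One writes $3A_0(q_i,q_j,q_k)=c_im_{jk}+c_jm_{ik}+c_km_{ij}$ and $3B(Q)=n_{01}n_{23}+n_{02}n_{13}+n_{03}n_{12}$. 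A case split on how many $c_i=\langle e_0,q_i\rangle$ vanish then gives $H_0^{-1}(0)=\mathcal{G}$, and the quartic is used essentially in two of the cases.

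Second, your Step 2 criterion is insufficient even for such a seed. A removable seed zero must have a non-surjective Jacobian, since otherwise it persists. So the linearization does not govern the perturbed zero set. The condition $\partial_tF(Q^*)\notin\operatorname{im}dF_{Q^*}$ controls only an $O(t)$ term, while zeros can sit at distance of order $\sqrt t$, where quadratic terms along the kernel compete. For example, $F=(w_1,w_2,w_3,w_4,\,t+w_5^2-w_6^2)$ on $\rr^6$ satisfies your criterion at the origin, yet has zeros for every $t>0$.

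What is actually required is this: the leftover component, restricted to the local zero surface of the other four, must have a \emph{definite} Hessian, and $t$ must be taken with the sign that pushes the constant term to the same side. That Hessian is determined by the seed. Randomly chosen $(A_1,B_1)$, as in your fallback, therefore cannot repair it; only the sign of $t$ is at your disposal.

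In the paper, the seed Jacobian at $I$ has rank $4$ with a $2$-dimensional kernel. Solving $F_{\tau,1}=F_{\tau,2}=F_{\tau,3}=0$ for $u_{23},u_{13},u_{12}$, and then $B=0$ for $x$, yields $3F_{\tau,0}=3\tau+y^2+z^2+(y+z)^2+O(\|w\|^3+|\tau|\,\|w\|)$. This is positive for small $\tau>0$; for $\tau<0$ a small loop of zeros would survive.

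Your Step 3 is a legitimate verification technique. The paper's second proof is such a Bernstein subdivision certificate, using a separating vector for all five Bernstein coordinates rather than a single sign-definite component. But it presupposes tensors that actually work, which your proposal never provides.
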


We will use $A$ and $B$ to determine $P_A$ and $P_B$, and finish the construction.  We present two proofs of \cref{thm:vanishing-tool}.  The first proof can be checked by purely analytic methods.  The second proof relies on a computer-assisted verification that a different pair of tensors $A,B$ indeed induce a function without zeros.  We present both proofs since the reduction to a finite number of cases of the second proof is interesting by itself.  The analytic proof is contained in \cref{sec:non-computational} and the computer-assisted proof is contained in \cref{sec:computational} and \cref{sec:final-computer-part}.  We keep the computer-assisted proof in the manuscript as the reduction is interesting on its own and it was the first proof we found for this result.

A second interpretation of \cref{thm:vanishing-tool} is the following.  We want to find a homogeneous polynomial $P_A$ of degree $3$ in four variables and a homogeneous polynomial $P_B$ of degree $4$ in four variables such that there exists no orthonormal change of basis that makes the four multilinear terms of $P_A$ vanish and the single multilinear term of $P_B$ vanish.

\section{Reduction to \cref{thm:vanishing-tool}}\label{sec:reduction}

We can parametrize oriented hyperplanes in $\rr^4$ by pairs $(u,t) \in S^3 \times \rr$, where $S^3$ is the unit sphere in $\rr^4$.  Each oriented hyperplane $H(u,t)$ also induces a sign function $S(u,t)$.  More precisely, we have the oriented hyperplane $H=H(u,t)$ and its sign function written as 
\[
H(u,t) = \{x \in \rr^4 : \langle x, u \rangle = t\} \qquad S_{u,t}(x) = \sgn (\langle x, u \rangle -t)
\]
To simplify the notation, given a four-tuple $\mathcal{H} = \Big((u_0,t_0), (u_1,t_1), (u_2,t_2), (u_3,t_3)\Big) \in \left( S^3 \times \rr\right)^4$, we define $S_i = S_{u_i,t_i}$ for $i=0,\dots,3$.

The possible preimages of the $16$ vectors $(S_0,S_1,S_2,S_3)$ define the interiors of regions into which the hyperplanes $H_0,\dots,H_3$ divide $\rr^4$.  Since we work with masses, including the boundary or not does not change the size of the region.

If we are given a mass $\mu$, we can think of $(S_0,\dots,S_3)$ as a random vector, by choosing a point $x\in \rr^4$ with distribution $\mu$ (after normalizing it to a probability measure) and evaluating the vector with $x$.

\begin{definition}
    Given $I \subseteq \{0,1,2,3\}$, a $4$-tuple $\mathcal{H}$ of oriented hyperplanes in $\rr^4$ and a mass $\mu$, we define the Walsh coefficient
    \[
    \mu_{\mathcal{H}}(I) = \int_{\rr^4}\left(\prod_{i \in I}S_i\right)d\mu.
    \]
\end{definition}

If $\mu$ is a probability measure, we have $\mu_{\mathcal{H}}(I) = \mathds{E}\left(\prod_{i \in I}S_i\right)$.  See, e.g., \cite{ODonnell2014} for background on Walsh coefficients and \cite{Simon2015} for applications of Fourier analysis to mass partitions.  The key observation is that $\mathcal{H}$ forms an equipartition of $\mu$ if and only if $\mu_{\mathcal{H}}(I) = 0$ for all non-empty subsets $I \subseteq \{0,1,2,3\}$.  This is because when $I$ is a singleton $\{i\}$, we have $\mu_{\mathcal{H}}(I) = 0$ if and only if $H_i$ bisects $\mu$.  If we have those four equations, and $I = \{i,j\}$, then $\mu_{\mathcal{H}}(I) = 0$ if and only if $H_i$ bisects the restriction of $\mu$ to each side of $H_j$, and so on.

The reason to look at the Walsh coefficients is that it reduces the problem of finding an equipartition to $15$ equations: 
\begin{itemize}
    \item $4$ equations with $I$ being a singleton
    \item $6$ equations with $I$ being a pair
    \item $4$ equations with $I$ being a triple
    \item $1$ equation with $I = \{0,1,2,3\}$.
\end{itemize}

To solve the first ten equations with $\mu = \gamma$ (a normal distribution centered at $0$), the first four equations translate to the hyperplanes going through the origin, and the next six equations translate to the normal vectors being pairwise orthogonal.  For a sufficiently small $\varepsilon$, our choice of $\mu$ given by 
\[
d{\mu} (x) = \Big(1+\varepsilon \cdot \eta(\|x\|^2) \big(P_A(x)+ P_B(x)\big)\Big)d{\gamma}(x),
\]
will imply that the four hyperplanes are close to $0$ and their normal vectors are close to being orthogonal.  We will formalize this statement later.

Let us first look at how the last five equations relate to $A$ and $B$.  Suppose for the moment that $t_0 = \dots = t_3 = 0$ and $(u_0,\dots, u_3) \in O(4)$.  Consider $I = \{1,2,3\}$, and let us compute $\mu_{\mathcal{H}}(I)$.  This is precisely
\begin{align*}
\mu_{\mathcal{H}}(I)  = \int_{\rr^4} S_1S_2S_3 d\mu  & = \int_{\rr^4}  S_1S_2S_3 \Big(1+\varepsilon \cdot \eta(\|x\|^2) \big(P_A(x)+ P_B(x)\big)\Big)d{\gamma}(x) \\
& = \varepsilon \int_{\rr^4}S_1S_2S_3\eta(\|x\|^2) \big(P_A(x)+ P_B(x)\big)d{\gamma}(x)
\end{align*}

The last equality follows from the fact that $\int_{\rr^4}S_1S_2S_3 d{\gamma} = 0$.  Now consider $x=(x_0,x_1,x_2,x_3)$ written in the orthonormal basis $(u_0,u_1,u_2,u_3)$.  In this case, $S_i = \sgn (x_i)$.  Since $\eta (\|x\|^2)$ and $d\gamma$ are even in every coordinate, every monomial in $P_A$ and $P_B$ integrates to $0$ unless it is odd in $x_1,x_2,x_3$ and even in $x_0$.  For $P_A$, since it is a homogeneous polynomial of degree $3$, its only monomial that contributes is the one for $x_1x_2x_3$.  For $P_B$, since it is a homogeneous polynomial of degree $4$, every monomial integrates to $0$.  Therefore, 
\begin{align*}
    \varepsilon \int_{\rr^4}S_1S_2S_3\eta(\|x\|^2) \big(P_A(x)+ P_B(x)\big)d{\gamma}(x) = \varepsilon c_{3,\eta} A(u_1,u_2,u_3),
\end{align*}
for some $c_{3,\eta}>0$ that depends only on $\eta$.

Therefore, to cancel the remaining Walsh coefficients we would need a common zero for the five components in \cref{thm:vanishing-tool}.  It remains to show that the approximation argument we have does not introduce additional problems (provided $\varepsilon$ is small enough), and that \cref{thm:vanishing-tool} is true.

\subsection{The perturbation argument}

Now we show how \cref{thm:vanishing-tool} implies \cref{thm:main}.

\begin{proof}[Proof of \cref{thm:main} assuming \cref{thm:vanishing-tool}]
Consider $d\mu$ and the Walsh coefficients as defined in the previous section.  For $\varepsilon>0$, let $X_\varepsilon = X_{\varepsilon}(\mathcal{H}) \in \rr^{10}$ be the vector consisting of the first ten Walsh coefficients, corresponding to singletons and pairs, and $Y_{\varepsilon}=Y_{\varepsilon}(\mathcal{H})\in \rr^5$ be the vector consisting of the remaining five Walsh coefficients, corresponding to triples and the four-tuple.

An equipartition of the measure corresponds to solving simultaneously $X_{\varepsilon}=0$ and $Y_{\varepsilon}=0$ (each in their corresponding dimension).  Recall that $\mathcal{H}$ is determined by the offset scalar $t_i$ for $0\le i \le 3$ and the normal vectors $u_i$ for $0 \le i \le 3$.

Consider the following ten parameters.  First, each $t_i$ for $0 \le i \le 3$, and second, $\rho_{ij} = \langle u_i, u_j \rangle$ for $0 \le i < j \le 3$.  This gives us a vector
\[
z=(t_0,t_1,t_2,t_3,\rho_{01},\rho_{02},\rho_{03},\rho_{12},\rho_{13},\rho_{23}) \in \rr^{10}.
\]
When $z = 0$, we have $(u_0,u_1,u_2,u_3) \in O(4)$.  The Walsh coefficients depend smoothly on these ten parameters near $z=0$, as the hyperplanes change continuously.  We use $Q$ to denote elements of $O(4)$.

We use $(Q,z)$ as local coordinates for configurations near the Gaussian equipartitions (i.e., near $O(4)$).  More precisely, let $G(\rho)$ be the symmetric matrix with diagonal entries $1$ and off-diagonal entries $\rho_{ij}$.  If all $\rho_{ij}$ are sufficiently small, the matrix $G(\rho)$ is positive definite.  We can then denote $U(Q,\rho) = Q G(\rho)^{1/2}$.  Denote its columns by $u_0,u_1,u_2,u_3$.  Then, $\langle u_i, u_j \rangle = \rho_{ij}$.  Let $\mathcal{H}(Q,z)$ be the four hyperplanes $H(u_i,t_i)$ for the choice of normal vectors $u_i$ we just described.  In what follows, we will use $X_{\varepsilon}(Q,z)$ instead of $X_{\varepsilon}(\mathcal{H}(Q,z))$.

We also note that every configuration $\mathcal{H}$ with normal vectors close enough to $O(4)$ is represented as $\mathcal{H}(Q',z)$ for some pair $(Q',z)$.  This is because, given $H(u_i,t_i)$ for $0\le i \le 3$ with $(u_0,u_1,u_2,u_3)$ sufficiently close to some $Q \in O(4)$, we can construct $U$, the $4 \times 4$ matrix with columns $u_0,\dots,u_3$, and the Gram matrix $G = U^TU$.  If $U$ is close enough to $Q$, then $U$ is invertible.  The polar decomposition gives $U = Q' G^{1/2}$, where $Q' = U(U^TU)^{-1/2}$.  Therefore, $U = U(Q',\rho)$.  The offsets $t_0,\dots,t_3$ then give us $\mathcal{H} = \mathcal{H}(Q',z)$.

For the Gaussian measure, the singleton coefficient associated with $H_i$ is $1-2\Phi (t_i)$, where $\Phi$ is the distribution function of a standard normal random variable.  Therefore, the partial derivative of the Walsh coefficient for $I=\{i\}$ at $z=0$ with respect to $t_i$ is $-2\varphi(0)$, where $\varphi$ is the standard normal density.

For a pair of hyperplanes $H_i, H_j$ going through $0$ (i.e., with $t_i = t_j = 0$), if we denote by $\alpha_{ij}$ the angle between $u_i$ and $u_j$, we have that the Walsh coefficient $\gamma_{\mathcal{H}}(I)$ for $I=\{i,j\}$ is simply $1-\frac{2\alpha_{i,j}}{\pi}$.  This happens because we can project the density to the $2$-dimensional plane spanned by $u_i, u_j$, and in any circle centered at the origin the proportion of points for which $S_i = S_j$ to points for which $S_i \neq S_j$ is $[\pi-\alpha_{i,j} : \alpha_{i,j}]$.  Then, $S_i$ and $S_j$ disagree on two opposite sectors of angular mass $2\alpha_{ij}$, so we have $\mathds{P}[S_i \neq S_j]=\frac{\alpha_{ij}}{\pi}$, and the value of the Walsh coefficient follows.  We also have $\rho_{ij} = \cos (\alpha_{ij})$.  Since $\cos'(\pi/2) = -\sin(\pi/2) = -1$, the derivative of the corresponding Walsh coefficient with respect to $\rho_{ij}$ is $\frac{2}{\pi}$.

At the centered orthogonal Gaussian configuration, the derivatives of a pair coefficient with respect to either offset $t_i, t_j$ vanish, while changing any other pairwise inner product has no effect on that pair coefficient to first order.

This gives us an explicit description of the Jacobian $D_z X_0(Q,0)$.  Namely,
\[
D_z X_0(Q,0) = \begin{bmatrix}
    -2\varphi(0) I_4 & 0 \\
    0 & \frac{2}{\pi}I_6
\end{bmatrix}.
\]
In particular, this matrix is invertible.  The implicit function theorem now says that the first ten equations (i.e., setting the first ten Walsh coefficients to zero) continue to have a unique nearby solution after the measure is perturbed.  Formally, for all sufficiently small $\varepsilon$ and every $Q \in O(4)$, there is a unique small vector $z_{\varepsilon}(Q) \in \rr^{10}$ such that $X_{\varepsilon}(Q,z_{\varepsilon}(Q)) = 0$.  Moreover, $\sup_{Q}\|z_{\varepsilon}(Q)\| \le C|\varepsilon|$ for some constant $C$ independent of $Q$ and $\varepsilon$, and the map $(Q,\varepsilon) \mapsto z_{\varepsilon}(Q)$ is smooth.

This means that the zeros of the first ten Walsh coefficients continue to have zeros that can be parametrized by $O(4)$, even if we no longer have exactly a set of orthogonal normal vectors nor the requirement to go through the origin.

Let us look at the remaining five Walsh coefficients.  Let $R_{\varepsilon}(Q)=Y_{\varepsilon}(\mathcal{H}(Q,z_{\varepsilon}(Q)))$.  An equipartition would require $R_{\varepsilon}(Q)=0$.

However, if we look at the behavior of $Y_0(Q,z)$ near $z=0$, we now have $Y_0(Q,0)=0$ (since orthonormal frames through $0$ form an equipartition of the centered Gaussian) and additionally the Jacobian satisfies $D_z Y_0(Q,0) = 0$.

This follows directly from the independence of the four centered Gaussian coordinates.  If we move one hyperplane, then after differentiating a triple or fourfold sign correlation, the remaining product contains an independent sign with expectation zero.  Similarly, changing one $\rho_{ij}$ affects only two Gaussian coordinates, so a triple or fourfold product still contains an independent centered sign, and the derivative is zero.

With the information about $Y_0(Q,0)$ and its derivatives with respect to directions in $z$, it follows that the change in value from $Y_{\varepsilon}(Q,0)$ and $Y_{\varepsilon}(Q,z_{\varepsilon}(Q))$ is $O(\varepsilon^2)$.  This is the key property that allows us to finish the proof.  Using the fact that $Y_{\varepsilon}(Q,0)$ is linear in $\varepsilon$, uniform Taylor expansion says that
\[
Y_{\varepsilon}(Q,z_{\varepsilon}(Q)) = Y_0(Q,0) + D_zY_0(Q,0)z_{\varepsilon}(Q) + Y_{\varepsilon}(Q,0) +O(\|z_{\varepsilon}(Q)\|^2+|\varepsilon|\|z_{\varepsilon}(Q)\|+ \varepsilon^2).
\]
Put concretely, if we denote $Q=(q_0,q_1,q_2,q_3)$, we have
\[
R_{\varepsilon} (Q) = \varepsilon \begin{pmatrix}
c_{3,\eta}A(q_1,q_2,q_3)\\
c_{3,\eta}A(q_0,q_2,q_3)\\
c_{3,\eta}A(q_0,q_1,q_3)\\
c_{3,\eta}A(q_0,q_1,q_2)\\
c_{4,\eta}B(q_0,q_1,q_2,q_3)
\end{pmatrix} + O(\varepsilon^2),
\]
where $c_{3,\eta}>0$ is the constant described in the previous section, and $c_{4,\eta}>0$ is the corresponding constant from the same analysis using $P_B$ instead of $P_A$.

If the vector $\begin{pmatrix}
c_{3,\eta}A(q_1,q_2,q_3)\\
c_{3,\eta}A(q_0,q_2,q_3)\\
c_{3,\eta}A(q_0,q_1,q_3)\\
c_{3,\eta}A(q_0,q_1,q_2)\\
c_{4,\eta}B(q_0,q_1,q_2,q_3)
\end{pmatrix}$ is nowhere zero on $O(4)$, then a simple compactness argument implies that $R_{\varepsilon}(Q)$ also does not vanish provided that $\varepsilon$ is small enough. 

This argument handles equipartitions in a fixed neighborhood of the Gaussian solution set.  To exclude new solutions elsewhere, notice that all bisecting hyperplanes for $\mu_{\varepsilon}$ have uniformly bounded offsets when $\varepsilon$ is small.  The resulting configuration space is compact.  Since $\mu_\varepsilon \to \gamma$ in total variation, the Walsh coefficients converge uniformly to those for the Gaussian distribution.  No new zeros appear outside an arbitrarily small neighborhood of the centered orthogonal frames for sufficiently small $\varepsilon$, concluding the proof.
\end{proof}

\section{Proof of \cref{thm:vanishing-tool}}\label{sec:non-computational}


Let $e_0, e_1, e_2, e_3$ be the standard basis of $\rr^4$.  Consider the two symmetric matrices
\[
M = \begin{bmatrix}
    0 & 1 & 1 & 1 \\
    1 & 1 & 0 & 0 \\
    1 & 0 & 2 & 0 \\
    1 & 0 & 0 & 4
\end{bmatrix} \qquad \mbox{ and } \qquad N = M + e_0e_0^T = \begin{bmatrix}
    1 & 1 & 1 & 1 \\
    1 & 1 & 0 & 0 \\
    1 & 0 & 2 & 0 \\
    1 & 0 & 0 & 4
\end{bmatrix}
\]
Let $A_0, C \in \Sym^3((\rr^4)^*)$ and $B \in \Sym^4((\rr^4)^*)$ be the tensors associated with the homogeneous polynomials
\[
P_{A_0} = x_0 x^T M x, \qquad P_C = 6x_1x_2x_3, \qquad P_B = (x^T N x)^2.
\]
For $\tau \in \rr$, let $A_{\tau} = A_0 + \tau C$.

Equivalently, the two polynomials we will use are
\begin{align*}
    P_{A_{\tau}}(x) & = 2x_0^2x_1 + 2x_0^2 x_2 + 2x_0^2 x_3 + x_0 x_1^2 + 2x_0 x_2^2 + 4x_0x_3^2 + 6\tau x_1x_2x_3, \\
    P_B(x) &= (x_0^2 + 2x_0x_1 + 2x_0x_2 + 2x_0x_3 + x_1^2 + 2x_2^2 + 4x_3^2)^2 
\end{align*}

For $Q = (q_0, q_1, q_2, q_3) \in O(4)$, consider
\begin{align*}
F_{\tau,0}(Q)&=A_\tau(q_1,q_2,q_3),&
F_{\tau,1}(Q)&=A_\tau(q_0,q_2,q_3),\\
F_{\tau,2}(Q)&=A_\tau(q_0,q_1,q_3),&
F_{\tau,3}(Q)&=A_\tau(q_0,q_1,q_2),
\end{align*}

\begin{theorem}\label{thm:no-computation}
Let \(H_{\tau}(Q) = (F_{\tau,0}(Q), F_{\tau,1}(Q), F_{\tau,2}(Q), F_{\tau,3}(Q), B(q_0,q_1,q_2,q_3))\).  For all sufficiently small $\tau >0$, $H_{\tau}$ has no zero on $O(4)$.
\end{theorem}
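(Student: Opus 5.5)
The plan is to treat \(\tau C\) as a perturbation of \(H_0\). Everything is expressed through two orthogonal invariants of \(Q=(q_0,q_1,q_2,q_3)\). The first is the unit vector \(w=(w_0,\dots,w_3)\) with \(w_i=\langle q_i,e_0\rangle\), i.e. the \(0\)-th row of \(Q\). The second is the symmetric matrix \(M'=Q^TMQ\) with entries \(m_{ij}=q_i^TMq_j\). Polarizing \(x_0\,x^TMx\) gives, for the triple complementary to \(k\),
\[
F_{0,k}(Q)=\tfrac13\bigl(w_i m_{jl}+w_j m_{il}+w_l m_{ij}\bigr),\qquad \{i,j,l\}=\{0,1,2,3\}\setminus\{k\}.
\]
Since \(Q^TNQ=M'+ww^T=:N'\), the last component is a positive multiple of \(n_{01}n_{23}+n_{02}n_{13}+n_{03}n_{12}\).

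\textbf{Step 1: determine the zero set at \(\tau=0\).} Let \(Z_0=H_0^{-1}(0)\). I expect \(Z_0\) to be finite, consisting of the frames with \(q_0=\pm e_0\) and \(\{q_1,q_2,q_3\}=\{\pm e_1,\pm e_2,\pm e_3\}\) in some order. These are zeros: there \(w=\pm e_0\), so \(F_{0,0}=0\) automatically. The other \(F_{0,k}\) reduce to \(m_{jl}=0\), which holds because the lower block of \(M\) is \(\operatorname{diag}(1,2,4)\). The \(B\)-term vanishes because \(n_{jl}=0\) for distinct \(j,l\ge1\).

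For the converse I would split into cases according to the support of \(w\). The aim is to show that the three equations \(F_{0,k}=0\) for \(k\ge1\), together with \(B=0\), force \(w=\pm e_0\). They then force \(M'\) to have a diagonal lower \(3\times3\) block, and because the eigenvalues \(1,2,4\) are distinct, \(q_1,q_2,q_3\) must be signed coordinate vectors. I expect the distinct weights \(1,2,4\) and the extra \(e_0e_0^T\) in \(N\) to be exactly what kills the other potential components of \(Z_0\). At each \(Q^*\in Z_0\) we have \(C(q_1,q_2,q_3)=\pm1\neq0\).

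\textbf{Step 2: away from \(Z_0\) and near \(Z_0\).} Away from \(Z_0\), compactness of \(O(4)\) gives \(|H_0|\ge c>0\), so \(H_\tau\) has no zeros there for small \(\tau\).

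Near a point \(Q^*\in Z_0\), write \(Q=Q^*\exp(X)\) with \(X\) skew and \(d=\|X\|\) small. The \(F_{0,0}\) component is special: both the \(w_i\) (\(i\ge1\)) and the \(m_{jl}\) (\(j,l\ge1\)) are \(O(d)\), so \(F_{0,0}=O(d^2)\). Meanwhile \(F_{\tau,0}=F_{0,0}\pm\tau+O(\tau d)\).

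For the other four components, I will compute the linearization of \((F_{0,1},F_{0,2},F_{0,3},B)\) at \(Q^*\) on the \(6\)-dimensional tangent space. For instance, \(F_{0,1}\approx m_{23}+w_2m_{03}+w_3m_{02}\) gives the linear form \(\dot m_{23}+\dot w_2+\dot w_3\). I then aim to show this map has rank \(4\). By the implicit function theorem (uniformly in small \(\tau\), since these components do not depend on \(\tau\) except through \(C\)), their common zeros near \(Q^*\) form a smooth \(2\)-dimensional surface \(S\). Zeros of \(H_\tau\) near \(Q^*\) must lie on \(S\). There \(F_{\tau,0}=q(X)\pm\tau+O(d^3+\tau d)\), where \(q\) is the quadratic part of \(F_{0,0}\) restricted to \(T_{Q^*}S\).

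\textbf{Main obstacle.} The crux is to show that \(q\) is sign-definite on \(T_{Q^*}S\), with the same sign as \(C(q_1,q_2,q_3)\) at \(Q^*\). Then \(F_{\tau,0}\) has a fixed nonzero sign on \(S\) near \(Q^*\) for every small \(\tau>0\), since both \(q(X)\) and \(\pm\tau\) push the same way. This also explains the requirement \(\tau>0\).

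By the sign and permutation symmetries of \(Z_0\), I would reduce to \(Q^*=I\) plus a check of how signs transform. Both \(q\) and \(C\) change sign together under flipping some \(q_i\), but this consistency has to be verified. I would first compute \(T_IS\) explicitly as a \(2\)-plane in the coordinates \(\dot w_1,\dot w_2,\dot w_3,\dot m_{12},\dot m_{13},\dot m_{23}\). Then I would evaluate the \(2\times2\) Gram matrix of \(F_{0,0}\approx w_1m_{23}+w_2m_{13}+w_3m_{12}\) on that plane, using the explicit weights \(1,2,4\), and check that its determinant is positive.

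The other delicate point is the global case analysis of Step 1. If it turns out messy, I would organize it by first using the \(B\)-equation and the two-by-two structure of the \(F_{0,k}\) to show \(w_1w_2w_3=0\), and then iterate.
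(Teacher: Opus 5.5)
\textbf{There is a gap in Step 1: you misidentify $Z_0$, and the argument breaks there.} Your overall plan matches the paper's proof: describe $H_0^{-1}(0)$, solve $F_{\tau,1}=F_{\tau,2}=F_{\tau,3}=B=0$ near each zero by the implicit function theorem, show that $F_{\tau,0}$ on the resulting surface is $\tau$ plus a positive definite form, and conclude by compactness. The part you flag as the main obstacle does work. At $I$ the tangent plane is given by $\dot m_{jl}=-(\dot w_j+\dot w_l)$ and $\dot w_1+\dot w_2+\dot w_3=0$. On it, the quadratic part of $3F_{0,0}$ is $-2(\dot w_1\dot w_2+\dot w_1\dot w_3+\dot w_2\dot w_3)=\dot w_1^2+\dot w_2^2+\dot w_3^2$. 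This matches the paper's $2(y^2+yz+z^2)$ with $x=-y-z$.

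Since $A_0$ and $B$ contain no multilinear monomials, \emph{every} signed permutation matrix is a zero of $H_0$. This includes those with $\pm e_0$ in a column $k\ge 1$, e.g.\ $q_0=e_1$, $q_1=e_0$, $q_2=e_2$, $q_3=e_3$, where $w=e_1$. So $Z_0=\mathcal{G}$ has $384$ points, while your description captures only the $96$ with $q_0=\pm e_0$. The converse you set out to prove is false.

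It is also false for a second reason. You propose to derive $w=\pm e_0$ from $F_{0,1}=F_{0,2}=F_{0,3}=B=0$ alone. By your own rank-$4$ computation, these four equations cut out a $2$-dimensional surface through $I$, and that surface contains points with $w\neq\pm e_0$. The equation $F_{0,0}=0$ is indispensable. For instance, in the full-support case the paper uses all four $A_0$-equations to reach $3B=w_0w_1w_2w_3(\alpha^2+\beta^2+\gamma^2+3)\neq 0$.

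As a consequence, Step 2 breaks. The bound ``$|H_0|\ge c>0$ away from $Z_0$'' fails near the $288$ missing points. Your local argument does not apply there as written: at such a point $C(q_1,q_2,q_3)=0$, because one argument is $\pm e_0$. So $F_{\tau,0}$ carries no $\pm\tau$ term, and the $\tau$-term sits in $F_{\tau,k}$ instead.

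The repair is what the paper does:
\begin{enumerate}
\item Prove $H_0^{-1}(0)=\mathcal{G}$ using all five equations, by case analysis on the support of $w$. Supports of size $2$ and $3$ lead to an eigenvector of $M$ orthogonal to $e_0$, which does not exist. Support of size one forces a signed permutation.
\item Observe that $H_\tau^{-1}(0)$ is invariant under $Q\mapsto Qg$ for every $g\in\mathcal{G}$. Column permutations permute the components $F_{\tau,k}$, and sign flips multiply them by $\pm1$.
\end{enumerate}
This reduces every point of $\mathcal{G}$ to a neighborhood of $I$ via $Q_n(Q^*)^T\to I$. It also makes your sign-consistency check unnecessary, since only the zero set matters.
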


Equivalently, for all sufficiently small $\tau > 0$, the tensors $A_{\tau}$ and $B$ satisfy the conclusion of \cref{thm:vanishing-tool}.

There is some room to modify the matrix $M$.  In fact, for any three different real numbers $\lambda_1,\lambda_2,\lambda_3$, if we replace $M$ by
\[
\begin{bmatrix}
    0 & 1 & 1 & 1 \\
    1 & \lambda_1 & 0 & 0 \\
    1 & 0 & \lambda_2 & 0 \\
    1 & 0 & 0 & \lambda_3
\end{bmatrix},
\]
the proof below still holds.  We follow the proof with the values $1,2,4$ for simplicity.

The proof is broken down into two steps.  First, we will characterize the zeros of $H_{\tau}$ when $\tau = 0$, and then we will show that the perturbation introduced by making $\tau >0$ removes the zeros without creating new ones.

\subsection{The zeros when $\tau = 0$.}

Let $\mathcal{G} \subset O(4)$ be the group of signed permutation matrices.

\begin{lemma}
    We have $H_0^{-1}(0) = \mathcal{G}$.
\end{lemma}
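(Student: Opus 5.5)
We use the polarized forms of the two polynomials. Since $P_{A_0}(x)=x_0\,x^TMx$ and $P_B(x)=(x^TNx)^2$, we have
\[
A_0(a,b,c)=\tfrac13\bigl(a_0\,b^TMc+b_0\,a^TMc+c_0\,a^TMb\bigr),
\]
and $B(a,b,c,d)=\tfrac13\bigl((a^TNb)(c^TNd)+(a^TNc)(b^TNd)+(a^TNd)(b^TNc)\bigr)$. For $Q=(q_0,\dots,q_3)\in O(4)$ we set $\alpha_i=(q_i)_0$, $S=Q^TMQ$ and $T=Q^TNQ=S+\alpha\alpha^T$. Note that $\alpha$ is the first row of $Q$, so it is a unit vector, and $S$ is symmetric with spectrum equal to that of $M$. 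In these variables the zero set $H_0^{-1}(0)$ is described by the system
\[
\alpha_jS_{kl}+\alpha_kS_{jl}+\alpha_lS_{jk}=0 \quad(\{j,k,l\}\subset\{0,1,2,3\}),\qquad T_{01}T_{23}+T_{02}T_{13}+T_{03}T_{12}=0 .
\]
Equivalently, after the change of variables $x=Qy$, the first four equations say that the multilinear monomials $y_jy_ky_l$ of $(\alpha\cdot y)(y^TSy)$ vanish.

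\emph{Inclusion $\mathcal G\subseteq H_0^{-1}(0)$.} Let $Q\in\mathcal G$. Then exactly one index $m$ has $q_m=\pm e_0$, so $\alpha=\pm e_m$. The matrix $S$ is a signed permutation conjugate of $M$, so its only nonzero off-diagonal entries lie in row and column $m$.
\begin{itemize}
\item A triple not containing $m$ gives an equation in which every $\alpha$ vanishes.
\item A triple containing $m$ reduces to $\pm S_{kl}$ with $k,l\neq m$, and this entry is $0$.
\end{itemize}
Since $\alpha\alpha^T$ is diagonal, $T$ has the same off-diagonal entries as $S$. Each term of the quartic equation pairs two disjoint edges, at least one of which avoids $m$, so every term vanishes.

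\emph{Converse.} We split into cases by the support size $r$ of $\alpha$.

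If $r=1$, say $\alpha=\pm e_m$, then $q_m=\pm e_0$. The triple containing $m$ gives $S_{kl}=0$ for all distinct $k,l\neq m$. The remaining three $q_k$ form an orthonormal basis of $e_0^\perp$, and $M$ restricted to $e_0^\perp$ is $\mathrm{diag}(1,2,4)$. The vanishing of the off-diagonal entries says this basis diagonalizes the restriction. Because $1,2,4$ are distinct, each such $q_k$ is $\pm$ a standard basis vector, so $Q\in\mathcal G$. (This is the only place where distinctness of $\lambda_1,\lambda_2,\lambda_3$ enters, matching the remark after \cref{thm:no-computation}.)

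The cases $r=2,3,4$ must be shown to have no solutions; this is the main obstacle. My plan is to exploit the structure $M=e_0w^T+we_0^T+D$ with $w=(0,1,1,1)$ and $D=\mathrm{diag}(0,1,2,4)$. This gives
\[
S=\alpha\beta^T+\beta\alpha^T+Q^TDQ,\qquad \beta=Q^Tw,\ \beta\perp\alpha,\ \|\beta\|^2=3 .
\]
Substituting, the cubic becomes $(\alpha\cdot y)\bigl(2(\alpha\cdot y)(\beta\cdot y)+y^TQ^TDQy\bigr)$, and the triple equations become conditions on the rank-one-plus-diagonal-conjugate data $(\alpha,\beta,Q^TDQ)$.

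For $r=4$ I would divide each triple equation by $\alpha_j\alpha_k\alpha_l$. This gives four linear equations in the six quantities $S_{kl}/(\alpha_k\alpha_l)$, whose solution space is two-dimensional; it can be parametrized explicitly. I would then combine this with the spectral constraints $\operatorname{tr}S=7$, $\operatorname{tr}S^2=\operatorname{tr}M^2$, $\det S=\det M$ and with the quartic $B$-equation for $T=S+\alpha\alpha^T$, aiming to derive a contradiction. Showing that $B$ has no common zero with this two-parameter family is the delicate point.

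For $r=2,3$ the triple equations include ones with a single nonzero $\alpha$, which force individual entries $S_{kl}=0$. I expect this to pin down enough of the frame, after applying a $\mathcal G$-symmetry, that $B$ reduces to an explicit expression in one or two angles that can be checked to be nonzero. If the hand analysis of $r=4$ becomes unwieldy, my fallback is to show that its solution set is at most finite modulo $\mathcal G$ and then evaluate $B$ on those points directly.
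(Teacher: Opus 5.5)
The case split by the support size $r$ of $\alpha$ is the same as the paper's, and your polarizations, the inclusion $\mathcal G\subseteq H_0^{-1}(0)$, and the case $r=1$ are all correct. However, the converse for $r=2,3,4$ is left as a plan, and these cases carry the real content of the lemma. As it stands the proposal does not prove $H_0^{-1}(0)\subseteq\mathcal G$.

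\textbf{The case $r=4$.} You had the right start but stopped short of the identity that finishes it, and the spectral constraints ($\operatorname{tr}S$, $\operatorname{tr}S^2$, $\det S$) are not needed. Solving the four linear equations in $y_{kl}=S_{kl}/(\alpha_k\alpha_l)$ gives $y_{01}=y_{23}=a$, $y_{02}=y_{13}=b$, $y_{03}=y_{12}=c$ with $a+b+c=0$, so complementary edges carry equal values. Since $N=M+e_0e_0^T$, you have $T_{kl}=\alpha_k\alpha_l(1+y_{kl})$, and the quartic equation becomes
\[
\alpha_0\alpha_1\alpha_2\alpha_3\bigl((1+a)^2+(1+b)^2+(1+c)^2\bigr)=\alpha_0\alpha_1\alpha_2\alpha_3\,(3+a^2+b^2+c^2)=0,
\]
which is impossible. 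No ``delicate'' analysis of the two-parameter family is required.

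\textbf{The cases $r=2,3$.} The missing idea is that $M$ has no eigenvector in $e_0^\perp$. If $v=(0,v_1,v_2,v_3)$ and $Mv=\lambda v$, the $0$-th coordinate gives $v_1+v_2+v_3=0$. The other coordinates give $(\lambda-1)v_1=(\lambda-2)v_2=(\lambda-4)v_3=0$, so at most one $v_i$ is nonzero, and hence $v=0$.
\begin{itemize}
\item For $r=3$, say $\alpha_3=0$, the three triple equations through index $3$ give $\alpha_0S_{13}=-\alpha_1S_{03}$ and $\alpha_0S_{23}=-\alpha_2S_{03}$. Substituting, $\alpha_1S_{23}+\alpha_2S_{13}=-2(\alpha_1\alpha_2/\alpha_0)S_{03}=0$. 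So $S_{03}=S_{13}=S_{23}=0$, and $q_3\in e_0^\perp$ is an eigenvector of $M$. The $B$-equation is not even needed here.
\item For $r=2$, say $\alpha_2=\alpha_3=0$, you get $S_{23}=0$, $\alpha_0S_{12}=-\alpha_1S_{02}$ and $\alpha_0S_{13}=-\alpha_1S_{03}$. Since $T_{kl}=S_{kl}$ whenever $\{k,l\}\cap\{2,3\}\neq\emptyset$, the quartic collapses to $-2(\alpha_1/\alpha_0)S_{02}S_{03}=0$. Either factor vanishing makes $q_2$ or $q_3$ an eigenvector of $M$ lying in $e_0^\perp$.
\end{itemize}

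Two statements in your sketch are also wrong.
\begin{itemize}
\item For $r=3$, no triple equation has a single nonzero $\alpha_j$; each equation through the vanishing index has two. So entries of $S$ are not forced to vanish individually there.
\item Distinctness of $1,2,4$ is not used only in the case $r=1$. It is exactly what drives the eigenvector lemma: if $\lambda_1=\lambda_2$, then $(0,1,-1,0)$ is an eigenvector of $M$ orthogonal to $e_0$.
\end{itemize}
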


Before we prove the lemma above, the reader may notice that this might be slightly unexpected.  If $H_0$ was generic enough and $H_0^{-1}(0)$ was not empty, we would expect it to be $1$-dimensional.  The choice of $M$ turns $H_0^{-1}(0)$ into a discrete set of points, which will be useful once we use $\tau>0$.

\begin{proof}
    For $Q=(q_0,q_1,q_2,q_3) \in O(4)$, let
    \[
    c_i = \langle e_0, q_i\rangle, \qquad m_{ij}=\langle Mq_i, q_j \rangle, \qquad n_{ij}=\langle Nq_i, q_j\rangle.
    \]
    The value $c_i$ is just the first coordinate of $q_i$.  By the definition of $N$, we have $n_{ij}=m_{ij}+c_ic_j$.

    These dot products are important, as we can write $P_{A_0}(x) = \langle e_0, x \rangle \langle Mx, x\rangle$ and $P_B(x) = \langle Nx, x \rangle^2$.

    The first important property of $M$ is that it has no eigenvectors orthogonal to $e_0$.  To show this, suppose for a contradiction that $v=(0,v_1,v_2,v_3)$ and $Mv = \lambda v$.  Then
    \[
    v_1 + v_2 + v_3 = 0, \qquad (\lambda-1)v_1 = (\lambda-2)v_2 = (\lambda-4)v_3 = 0.
    \]
    The equations on the right side mean that at most one of $v_1,v_2,v_3$ can be nonzero.  However, the fact that the sum $v_1 + v_2 + v_3$ is zero implies that $v_1=v_2=v_3=0$, which is the contradiction we wanted.
    
    A direct computation shows that for different $i,j,k$, we have
    \[
    3A_0(q_i, q_j, q_k) = c_i m_{jk} + c_j m_{ik} + c_k m_{ij}.
    \]
    Likewise, for $B$ we have
    \[
    3 B(q_0, q_1, q_2, q_3) = n_{01}n_{23}+ n_{02}n_{13}+ n_{03}n_{12}.
    \]
    We split the proof by how many values among $c_0,c_1,c_2,c_3$ are non-zero.  First, assume that $H_0(Q) = 0$.  The fact that the rows of an orthogonal matrix are unit vectors means that at least one $c_i$ is nonzero.

    \textbf{Case 1.}  If all $c_i$ are nonzero, let $y_{ij} = \frac{m_{ij}}{c_ic_j}$.  If $A_0(q_i,q_j,q_k)=0$, after dividing by $c_ic_jc_k$ we get $y_{ij}+ y_{jk}+ y_{ki}=0$.

    If this happens for every $3$-element subset $\{i,j,k\}$ of $\{0,1,2,3\}$ (i.e., the first four coordinates of $H_0(Q)$ are zero), then we can solve these four linear equations and obtain that there exist $\alpha, \beta, \gamma$ such that $\alpha + \beta + \gamma =0$ and
    \begin{align*}
        y_{01}=y_{23}&=\alpha \\
        y_{02}=y_{13}&=\beta \\
        y_{03}=y_{12}&=\gamma 
    \end{align*}

    When we consider $B$, we now have
    \begin{align*}
    3B(q_0,q_1,q_2,q_3) & = c_0c_1c_2c_3 ((1+\beta)^2 + (1+\alpha)^2 + (1+\gamma)^2) \\
   & = c_0c_1c_2c_3 (\alpha^2 + \beta^2 + \gamma^2 + 3) \neq 0.
   \end{align*}

   \textbf{Case 2.} Suppose that exactly three values of $c_0,c_1,c_2,c_3$ are nonzero.  Without loss of generality, $c_3=0$ and the rest are nonzero.  We now have three equations from $A_0$ that are reduced, giving us
   \begin{align*}
       c_0m_{13}+ c_1m_{03} &=0 \\
       c_0m_{23}+ c_2m_{03} &=0 \\
       c_1m_{23}+c_2m_{13}&=0.
   \end{align*}
   Solving for $m_{13}$ and $m_{23}$ in the first two equations and substituting in the third implies that $m_{03}=0$.  This, in turn, implies that $m_{13}=m_{23}=0$.  In other words, $Mq_3$ is orthogonal to $q_0, q_1, q_2$, which means that $q_3$ is an eigenvector of $M$.  Our condition $c_3=0$ means that $q_3$ is orthogonal to $e_0$, a contradiction.

   \textbf{Case 3.} Now suppose that exactly two values of $c_0,c_1,c_2,c_3$ are nonzero.  We may assume without loss of generality that $c_0,c_1$ are nonzero and $c_2=c_3=0$.  The equation from $A_0$ translate to
   \[
   m_{23}=0, \qquad c_0m_{12}+c_1m_{02}=0, \qquad c_0m_{13}+c_1m_{03}=0.
   \]
   Substituting into the equation for $B$, we have
   \[
   3B(q_0,q_1,q_2,q_3)=m_{02}m_{13}+m_{03}m_{12}=-2\frac{c_1}{c_0}m_{02}m_{03}.
   \]
   If $B(q_0,q_1,q_2,q_3)=0$, at least one of $m_{02},m_{03}$ is zero.  Assume without loss of generality that it is $m_{02}$.  Then $m_{02}=m_{12}=m_{23}=0$, so $q_2$ is an eigenvector of $M$ orthogonal to $e_0$ (again, due to $c_2=0$), which is a contradiction.

   \textbf{Case 4.}  Now suppose exactly one of $c_0, c_1, c_2, c_3$ is nonzero.  Without loss of generality, suppose $c_0 \neq 0$ and $c_1=c_2=c_3= 0$ (the other cases are analogous after relabeling).  Since the rows of an orthogonal matrix are unit vectors, we have $c_0 = \pm 1$ and so $q_0 = \pm e_0$.  In particular, the compression of $M$ to $e_0^{\perp}$ is a diagonal matrix $M'$ with diagonal entries $1,2,4$.  The equations from $A_0$ now become $m_{12}=m_{23}=m_{13}=0$.  Therefore, $q_1,q_2,q_3$ diagonalize $M'$.

   The matrix $M'$ is a diagonal matrix with values $1,2,4$ in the main diagonal.  This implies that $q_1,q_2,q_3$ must be signed permutations of $e_1,e_2,e_3$, as we wanted to show.

   If $Q$ is a signed permutation, then one of its elements is $\pm e_0$, which means that one of the terms $c_i$ will be $\pm 1$ while the rest will be zero.  The terms $m_{jk}$ for distinct $j,k \neq i$ will also be zero.  This causes all the equations from $A_0$ as well as the equation from $B$ to be zero.  Alternatively, notice that $A_0$ and $B$ have no multilinear terms, which is equivalent to the fact that signed permutation matrices evaluate to zero in $H_0$.

   \end{proof}

   For every value of $\tau$, the condition $H_{\tau}(Q)=0$ is invariant under right multiplication by elements of $\mathcal{G}$.  This follows from the fact that $A_{\tau}$ and $B$ are symmetric, so permuting the columns of $Q$ and multiplying them by non-zero constants has no effect on whether $H_{\tau}$ evaluates to zero.

   \subsection{The new perturbation argument.}

   In this subsection, we show that the term $\tau C$ now removes all the zeros from $H$.  Due to the invariance of zeros by $\mathcal{G}$, it is sufficient to understand the effect of the perturbation near the identity.

   This allows us to use a parametrization of the orthogonal matrices close to $I$.  These are standard tools from Lie algebra \cite{Hall2003}, which significantly simplify the analysis below.

   First, the tangent space of $SO(4)$ at $I$ is the set of skew-symmetric matrices:
   \[
   T_ISO(4) = \{X \in M_{4\times4}: X^T = -X\},
   \]
   which follows from differentiating $QQ^T=I$ at $Q=I$.  Given a skew-symmetric matrix $X$, we have $Q=\exp X$ is orthogonal, since $QQ^T = \exp(X)\exp(X^T) = \exp(X)\exp(-X) = I$.  This makes parameterizing the matrices in $SO(4)$ close to $I$ as exponential matrices of a skew-symmetric matrix $X$.  Explicitly, let
   \[
   X(u) = \begin{bmatrix}
       0 & u_{01} & u_{02} & u_{03} \\
       -u_{01} & 0 & u_{12} & u_{13} \\
       -u_{02} & -u_{12} & 0 & u_{23} \\
       -u_{03} & -u_{13} & -u_{23} & 0
   \end{bmatrix} \qquad \mbox{and } Q(u) = \exp (X(u)).
   \]

Every $6$-dimensional vector $u$ corresponds to an orthogonal matrix, and we parametrize every orthogonal matrix sufficiently close to $I$ this way.  Now we analyze the first order Taylor expansions at $(u,\tau)=(0,0)$ of the components of $H_{\tau}(Q(u))$.  Recall that $Q(u) = I + X(u) + O(\|u\|^2)$.  The notation simplifies significantly if we denote $u_{01}=x$, $u_{02}=y$, and $u_{03}=z$.  
\[
I +  X(u) = \begin{bmatrix}
       1 & x & y & z \\
       -x & 1 & u_{12} & u_{13} \\
       -y & -u_{12} & 1 & u_{23} \\
       -z & -u_{13} & -u_{23} & 1
   \end{bmatrix}
\]

This gives us $c_0 =1 + O(\|u\|^2)$, $c_1 = x+ O(\|u\|^2)$, $c_2 = y + O(\|u\|^2)$, and $c_3 = z + O(\|u\|^2)$.

The strategy of the proof will be to use the three equations $F_{\tau,i}(Q(u))=0$ for $i=1,2,3$ to solve for $u_{12}, u_{23}, u_{13}$ in terms of $x,y,z$.  Then, we will use the equation $B=0$ to solve for $x$ in terms of $y,z$, and finally see that the resulting expansion of $F_{\tau,0}$ cannot be zero provided that $\tau>0$ is small enough.  The delicate part of the proof is that we do these reductions while carrying some bounded error terms, which we have to compute carefully to make sure that the final argument gives the desired result.

The computations of $m_{ij}$ are tedious, but computing the linear dependence on $u$ simplifies the process significantly.
\begin{align*}
    m_{12} &= x + y - u_{12} + O(\|u\|^2) \\
    m_{13} &= x + z - 3u_{13}+ O(\|u\|^2) \\
    m_{23} &= y + z - 2u_{23} + O(\|u\|^2)
\end{align*}

Recall that we will compute the terms $c_k m_{ij}$ for $i,j,k$ different.  Since $c_1,c_2,c_3$ are already linear in $u$, we need to approximate $m_{01}, m_{02}, m_{03}$ up to constant terms, which (with a slight abuse of notation) simplifies to
\[
m_{01} = m_{02} = m_{03} = 1 + O(\|u\|).
\]
Therefore, 
\begin{align*}
    3A_0(q_0,q_2,q_3) & = 2y+2z-2u_{23} + O(\|u\|^2), \qquad \mbox{so} \\
    3F_{\tau,1} & = 2y+2z-2u_{23} + O(\|u\|^2+|\tau|\cdot\|u\|)
\end{align*}
This holds because the term from $C$ does not contribute to $A_{\tau}(q_0,q_2,q_3)$ at $u=0$.

Similar analysis gives us

\begin{align*}
    3F_{\tau,2} & = 2x + 2z - 3u_{13} + O(\|u\|^2+|\tau|\cdot\|u\|) \\
    3F_{\tau,3} & = 2x + 2y - u_{12} + O(\|u\|^2+|\tau|\cdot\|u\|).
\end{align*}

Approximating to first order (and using the fact that $n_{01} = n_{02} = n_{03} = 1$ and $n_{12} = n_{23} = n_{13}=0$ at $u=0$), we have
\begin{align*}
3B & = m_{23} + m_{13} + m_{12} + O(\|u\|^2) \\
& = 2x + 2y + 2z - u_{12}-3u_{13}-2u_{23} + O(\|u\|^2)    
\end{align*}

We treat $F_{\tau,0}$ with one more degree of precision, obtaining

\begin{align*}
3F_{0,0} &= x(y+z-2u_{23}) + y(x+z-3u_{13}) + z(x+y-u_{12}) + O(\|u\|^3) \\
& = 2xy + 2xz+ 2yz -2xu_{23}-3yu_{13}-zu_{12} + O(\|u\|^3).    
\end{align*}

The term $6\tau x_1x_2x_3$ contributes $\tau$ at $Q=I$, which shows that at $Q$ it contributes $\tau + O(|\tau| \cdot \|u\|)$.  Therefore,

\[
3F_{\tau,0}(Q(u)) = 3\tau +  2xy + 2xz+ 2yz -2xu_{23}-3yu_{13}-zu_{12} + O(\|u\|^3 + |\tau| \cdot \|u\|).
\]

If we set the equations $F_{\tau,1}=0$, $F_{\tau,2}=0$, $F_{\tau,3}=0$, the linear part is invertible with respect to $u_{12}, u_{13}, u_{23}$.  Therefore, it can be solved uniquely in terms of $v=(x,y,z)$.  More precisely, the derivative of $(3F_{\tau,1},3F_{\tau,2},3F_{\tau,3})$ with respect to $(u_{23},u_{13},u_{12})$ is $\operatorname{diag}(-2,-3,-1)$, which is invertible.  By the implicit function theorem, these three equations determine $u_{23},u_{13},u_{12}$ uniquely as smooth functions of $(x,y,z,\tau)$ near the origin.  When $v=(x,y,z)=(0,0,0)$, the solution is $u_{12}=u_{13}=u_{23}=0$.  The explicit solution is
\begin{align*}
    u_{23} & = y + z + O(\|v\|^2+|\tau|\cdot \|v\|), \\
    u_{13}  & = \frac{2}{3}(x+z) + O(\|v\|^2+|\tau|\cdot \|v\|), \\
    u_{12} & = 2x + 2y + O(\|v\|^2+|\tau|\cdot \|v\|).
\end{align*}

If we set $3B=0$ and substitute the values of $u_{12}$, $u_{13}$, $u_{23}$, we have
\[
0 = -2x-2y-2z + O(\|v\|^2+|\tau|\cdot \|v\|).
\]
We now solve for $x$, set $w = (y,z)$ and have
\[
x = -y - z + O(\|w\|^2 + |\tau|(\|w\|))
\]
Now, once we substitute the values in the equation for $3F_{\tau,0}(Q(u))$, we have
\begin{align*}
    3F_{\tau,0}(Q(u)) & = 3\tau + 2(y^2 + yz + z^2) + O(\|w\|^3 + |\tau|\cdot \|w\|) \\
& =  3\tau + y^2 + z^2 + (y+z)^2 + O(\|w\|^3 + |\tau|\cdot \|w\|) .
\end{align*}

We can choose a neighborhood of $(\tau,w) = (0,0)$ such that the absolute value of the error term is bounded above by $|\tau| + y^2 + z^2$, which will imply that $3F_{\tau,0}(Q(u)) \ge 2\tau + (y+z)^2 \neq 0$ for $\tau > 0$, finishing the proof.

So far, we have shown that close to $(\tau,u) = 0$, we cannot have a zero for $\tau > 0$.  Now we can prove \cref{thm:no-computation}.  For the sake of contradiction, assume that there exists a sequence $\tau_n \to 0$ of positive real numbers and a sequence $Q_n \in O(4)$ such that $H_{\tau_n}(Q_n) = 0$.  By compactness of $O(4)$, we can assume that $Q_n$ converges to some matrix $Q^*\in O(4)$.  Taking the limit, we have that $H_0(Q^*) = 0$, which means that $Q^*$ is a signed permutation matrix.  Consider now the sequence of matrices $Q_n(Q^*)^T$.  We know that $H_{\tau_n}(Q_n(Q^*)^T)=0$.  However, since  $Q_n(Q^*)^T \to I$, for $n$ sufficiently large the pair $(\tau_n, Q_n(Q^*)^T)$ can be written as $(\tau_n , Q(u_n))$ for some sequence $u_n \to 0$.  For large enough $n$, we have proved that $H_{\tau_n}(Q(u_n))$ cannot have a zero, which is a contradiction.
\section{A second proof of \cref{thm:vanishing-tool}}\label{sec:computational}

In this section we give a second proof of \cref{thm:vanishing-tool}.  The approach of this proof is different than the one presented in \cref{sec:non-computational}.  Instead of identifying the zeros of a pair of tensors and then introducing a perturbation, the goal will be to reduce the problem from studying tensors evaluated on $O(4)$ to polynomials evaluated on $[-1,1]^6$.  To show that the resulting polynomials do not share a zero, we will introduce a subdivision argument that certifies that the nonexistence of a zero on certain subsets of $[-1,1]^6$.  Then, a computer program verifies that the subdivision argument covers the entire domain.  Ultimately, this reduces the problem to checking $38,857$ final cases.

We keep this proof in the manuscript because the subdivision argument is interesting by itself, and might be easier to adapt to other instances of mass partition problems.  The proof presented here was the first proof we found of \cref{thm:vanishing-tool}, before finding the tensors in \cref{sec:non-computational} which are much easier to check.  

We introduce a new pair of tensors $A$ and $B$.  Let $P_A$ and $P_B$ be the homogeneous polynomials associated to $A$ and $B$, respectively.  We have $P_A(x) = A(x,x,x)$ and $P_B(x) = B(x,x,x,x)$.  
Take $A$ and $B$ to be the tensors associated with the following polynomials.

\begin{align*}
P_A(x)={}&-13x_0^3+36x_0^2x_1-9x_0^2x_2-15x_0^2x_3
+15x_0x_1^2-45x_0x_2^2+42x_0x_3^2\\
&-8x_1^3-30x_1^2x_2+18x_1x_2^2-\frac65x_1x_2x_3
+18x_1x_3^2-3x_2^3-30x_2x_3^2-13x_3^3.
\\
P_B(x)={}&-24x_0^2x_1x_3-24x_0^2x_3^2+36x_0x_1x_3^2
-12x_0x_2^2x_3\\
&-12x_1^2x_2x_3-6x_1^2x_3^2-12x_1x_2^2x_3
+12x_1x_2x_3^2-24x_2x_3^3.
\end{align*}

Before showing that these two tensors prove \cref{thm:vanishing-tool}, we discuss the intuition that guided their search.  The presentation above has a single multilinear term for $P_A$ and none for $P_B$.

\subsection{Motivation for finding the tensors}\label{subsec:motivation}

Let us explain the strategy used to find $A$ and $B$.  For the sake of the correctness of \cref{thm:main}, this subsection can be skipped.  However, understanding the general path one can follow to find examples like this is perhaps instructive, as opposed to accepting that the two tensors were found by pure luck.  

For a cubic tensor $A \in \Sym^3((\rr^4)^*)$, consider the set of zeros
\[
Z_A = \left\{(q_0,q_1,q_2,q_3) \in SO(4): \begin{bmatrix}
    A(q_1, q_2, q_3) \\
    A(q_0, q_2, q_3) \\
    A(q_0, q_1, q_3) \\
    A(q_0, q_1, q_2)
\end{bmatrix} = \begin{bmatrix}
    0 \\
    0 \\
    0 \\
    0
\end{bmatrix} \right\}
\]

For a generic tensor $A$, transversality suggests that the dimension of $Z_A$ is $\dim SO(4) - 4 = 2$.  The dimension of the zeros of the quartic is five.  We might expect that $Z_A$ and the zeros of $B$ intersect in a curve of zeros.  A pair $A,B$ that satisfies \cref{thm:vanishing-tool} should exploit some special structure from the geometry of $Z_A$.

This is exactly the issue we run into when we try to apply Avis' methods to dimension $4$.  The degrees of freedom give us some breathing room.

Let $\mathcal{G}^+ \subset SO(4)$ be the group of orientation-preserving signed permutation matrices.  An element $h \in \mathcal{G}^+$ acts on an ordered frame by permuting columns and changing some of their signs.  The set $Z_A$ is invariant under this action, by right multiplication.

More precisely, we can write $h \in \mathcal{G}^+$ as a $5$-tuple $h = (\sigma, s_0,s_1,s_2,s_3)$, where $\sigma$ is a permutation and each $s_i \in \{-1,1\}$ determines whether we flip the sign of the $i$-th column.  Consider $\chi(h) = \prod_{i=0}^3 s_i$.

Since $B$ is symmetric, $B (Qh) = \chi(h)B(Q)$.  Therefore, if a connected component of $Z_A$ contains both $Q$ and $Qh$ for some $h \in \mathcal{G}^+$ with $\chi(h) = -1$, then a direct application of the intermediate value theorem shows that no $B$ will satisfy \cref{thm:vanishing-tool}.

Therefore, the search first focused on finding an $A$ where the connected components of $Z_A$ do not connect $Q$ and $Qh$ when $\chi(h) = -1$.

Now, given $Q = (q_0,q_1,q_2,q_3)$ and $B$, a convenient way to describe $B(q_0,\dots,q_3)$ is as the dot product of tensors $\langle B, v(Q) \rangle$ with 
\[
v(Q) = q_0\odot q_1\odot q_2\odot q_3 = \frac{1}{4!}\sum_{\sigma \in S_4} q_{\sigma(0)}\otimes q_{\sigma(1)}\otimes q_{\sigma(2)}\otimes q_{\sigma(3)}.
\]
Then, if no connected component of $Z_A$ contains elements $Q$ and $Qh$ with $\chi(h) = -1$, we can create a sign assignment on $Z_A$.  More precisely, a function $s:Z_A \to \{-1,1\}$ such that $s$ is constant on each connected component of $Z_A$ and such that $s(Qh) = \chi(h)s(Q)$ for all $h \in \mathcal{G}^+$ and $Q \in Z_A$.  This reduces the existence of a $B$ as we want to showing that $0 \not\in \conv \{s(Q)v(Q) : Q \in Z_A\}$ in $\Sym^4(\rr^4)$.

This changes the problem of finding $A$ and $B$ in a vacuum, to first finding the tensor $A$ as we look for some properties of the connected components of $Z_A$, and then finding $B$ by finding a hyperplane (in $\Sym(\rr^4)$ as a vector space) that separates $0$ from $\{s(Q)v(Q) : Q \in Z_A\}$.

If this could be done formally, it would constitute a full proof of \cref{thm:vanishing-tool}.  Instead, what was done was to test candidates for $A$, sample points from $Z_A$, and then use a separation argument as above with the sampled points to obtain candidates for $B$.  This is not a complete argument for why the resulting tensors work, but it provides a way to obtain the candidates.  Once we have these candidates, we can follow the steps in the next section to prove \cref{thm:vanishing-tool}.

\subsection{Why the tensors work.}

Let us first describe the strategy we follow to prove \cref{thm:vanishing-tool}, assuming we have $A$ and $B$.  We will do the analysis in $SO(4)$ instead of $O(4)$, as it is not a material change for finding zeros of $A$ and $B$ (flipping one column does not affect whether a frame evaluates to zero or not).

First, we will use a parametrization to go from $SO(4)$ to the hypercube $[-1,1]^6$.  In general, we do not have a clean parametrization of $SO(4)$ with $[-1,1]^6$.  However, we can use the fact that $A$ and $B$ are symmetric to reduce our particular problem to $[-1,1]^6$.  The tensors will translate to polynomials of bounded degree with $6$ variables.  The problem will be reduced to showing that these polynomials do not all share a zero.

To show that the polynomials do not share a zero, we will subdivide $[-1,1]^6$ into smaller hypercubes, and show that, restricted to each of those hypercubes the polynomials do not have a zero using Bernstein coefficients.

Bernstein range bounds, de Casteljau subdivision, and subdivision solvers for polynomial systems have a rich history \cites{Garloff1986, Farouki1988, Garloff1993, Sherbrooke1993, Mourrain2009, Farouki2012}.

We identify $\rr^4$ with the quaternions $\hh$.  If $p,r \in \hh \setminus \{0\}$, the map
\[
x \longmapsto \frac{px\overline r}{|p||r|}
\]
is an orientation-preserving orthogonal transformation.  Every element of $SO(4)$ has this form, and
\begin{align*}
    \Spin(4) = S^3 \times S^3 & \longrightarrow SO(4) \\
    (p,r) & \longmapsto (x \mapsto px\overline{r}) 
\end{align*}
is a double cover (see, e.g., \cite{Coxeter1946}).  If we denote by $Q(p,r)$ the transformation in $SO(4)$ generated by $p,r$, and $e_0 = 1, e_1 = i, e_2 = j, e_3 = k$, the columns of $Q(p,r)$ are $\tilde{q}_i = pe_i \overline{r}/(|p||r|)$.  Multiplication of $p,r$ by a quaternionic unit sign-permutes its four coordinates and the columns of $Q(p,r)$.  Therefore, using this operation we can assume that the first coordinate of $p$ and $r$ is the one with largest absolute value.

We introduce one final new notation to build $\hat{q_i} = pe_i\overline{r}$ and define
\begin{align*}
    f_0(p,r) &= A(\hat{q}_1,\hat{q}_2,\hat{q}_3) \\
    f_1(p,r)&= A(\hat{q}_0,\hat{q}_2,\hat{q}_3) \\
    f_2(p,r)&=A(\hat{q}_0,\hat{q}_1,\hat{q}_3)\\
    f_3(p,r)&=A(\hat{q}_0,\hat{q}_1,\hat{q}_2)\\
    f_4(p,r)&=B(\hat{q}_0,\hat{q}_1,\hat{q}_2,\hat{q}_3).
\end{align*}

Note that the maps above simultaneously vanish if and only if the equivalent maps with $\tilde{q}_i$ instead of $\hat{q}_i$ simultaneously vanish.  We can use this fact to choose a different normalization of the double-cover of $\Spin(4)$.  

The equations are bihomogeneous in $p,r$ (i.e., the pair $(p,r)$ gives the same element of $SO(4)$ (up to sign) as $(\lambda p, \lambda' r)$ for nonzero scalar $\lambda, \lambda'$).  Therefore, we can effectively parametrize $SO(4)$ with $\rr \mathds{P}^3 \times \rr \mathds{P}^3$, the product of two projective spaces.  Then, given $[p_0:p_1:p_2:p_3] \in \rr \mathds{P}^3$, we can choose a canonical representative by dividing by the coordinate with largest absolute value.  This would usually require several charts covering $\rr \mathds{P}^3$.  However, for the purpose of studying the zero sets $Z_A$ and $Z_B$, since the tensors are symmetric, we can move the coordinate with the largest absolute value to the first position as described above.  This allows us to parametrize
\[
[p_0:p_1:p_2:p_3] \longleftrightarrow (1,x_1,x_2,x_3) \quad |x_i| \le 1
\]
In other words, every possible common zero of our map in $SO(4)$ is represented by a point in the hypercube $[-1,1]^6$.  We denote the elements of $[-1,1]^6$ as $(x_1,x_2,x_3,y_1,y_2,y_3)$.  With this new parametrization, the maps $f_0,\dots, f_4$ described above are polynomials on the variables $x_i,y_j$.  The first four, induced by $A$, become polynomials of degree at most $3$ in the $x$ variables and at most $3$ in the $y$ variables.  The application of $B$ becomes a polynomial of degree at most $4$ in the $x$ coordinates and at most $4$ in the $y$ coordinates.

This induces a polynomial map
\begin{align*}
    f:[-1,1]^6 &\to \rr^5. \\
    f&=(f_0,\dots,f_4)
\end{align*}

\cref{thm:vanishing-tool} is equivalent to showing that $f$ has no zeros.  The first four polynomials are induced by $A$, the last polynomial is induced by $B$.  To show that $f$ has no zeros, we use Bernstein coordinates.

To motivate this approach, let us explain the process in one variable.  We will do this for polynomials of degree at most four, since those are the ones we will work with.  For $0 \le k \le 4$, let
\[
\beta_k(t) = \binom{4}{k}t^k(1-t)^{4-k}, \qquad t \in [0,1].
\]
There are two properties of these functions that we use: first, $\beta_k(t) \ge 0$ for all $t \in [0,1]$, and second, $\sum_{k=0}^4 \beta_k(t) = 1$.

Every polynomial $h$ of degree at most $4$ can be written uniquely as
\[
h(t) = \sum_{k=0}^4 b_k \beta_k(t).
\]
Therefore, $\min_k b_k \le h(t)$ for all $t \in [0,1]$.  If all $b_k > 0$, we can guarantee that $h(t)$ does not have a zero in the interval $[0,1]$.  For higher dimension, we first reparametrize $[-1,1]^6$ as $[0,1]^6$ using the linear change of variable $z_j = \frac{1}{2}+\frac{1}{2}w_j$ for each of the six coordinates of $[-1,1]^6$.  Let $\tilde{f}_i$ for $i=0,\dots,4$ be the corresponding polynomials after this change of variables and $\tilde{f} = (\tilde{f}_0,\dots,\tilde{f}_4)$.

For a multi-index $\alpha = (\alpha_1,\dots,\alpha_6) \in \{0,\dots,4\}^6$ we define
\[
\beta_{\alpha}(z) = \prod_{j=1}^6 \beta_{\alpha_j}(z_j).
\]
These form a basis of polynomials of degree at most $4$ in each variable.  In other words, for $s=0,\dots,4$, we can write each $\tilde{f}_s$ as
\[
\tilde{f}_s(z) = \sum_{\alpha \in \{0,\dots,4\}^6}b_{s,\alpha}\beta_{\alpha}(z),
\]
where $b_{s,\alpha}$ are scalars.  Now consider the vectors $C_{\alpha} = (b_{0,\alpha},\dots,b_{4,\alpha}) \in \rr^5$ for all $\alpha \in \{0,\dots,4\}^6$, which we call the \textit{Bernstein vectors} of $\tilde{f}$.

\begin{lemma}\label{lem:separating-bernstein}
    For every $z \in [0,1]^6$, we have $\tilde{f}(z) \in \conv \{C_{\alpha}: \alpha \in \{0,\dots,4\}^6\}$.
\end{lemma}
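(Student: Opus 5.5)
The plan is to show that the tensor-product Bernstein basis functions $\beta_\alpha$ are nonnegative on $[0,1]^6$ and sum to one there. Then $\tilde f(z)$ is a convex combination of the Bernstein vectors $C_\alpha$, with the same weights $\beta_\alpha(z)$ for every component.

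First, for each fixed $z\in[0,1]^6$, we have $\beta_\alpha(z)=\prod_{j=1}^6\beta_{\alpha_j}(z_j)\ge 0$, since each factor $\beta_k(t)=\binom{4}{k}t^k(1-t)^{4-k}$ is nonnegative for $t\in[0,1]$.

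Second, the weights sum to one. Expanding the product gives
\[
\sum_{\alpha\in\{0,\dots,4\}^6}\beta_\alpha(z)=\prod_{j=1}^6\Big(\sum_{k=0}^4\beta_k(z_j)\Big)=\prod_{j=1}^6 (z_j+(1-z_j))^4=1,
\]
by the binomial theorem applied in each coordinate.

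Third, we check that the expansion $\tilde f_s=\sum_\alpha b_{s,\alpha}\beta_\alpha$ actually exists with a common index set for all five components. Each $f_s$ has degree at most $4$ in each of the six variables. This holds because $\hat q_i=pe_i\overline r$ is linear in the coordinates of $p$ and of $r$ separately, and $A$, $B$ are multilinear in three or four such vectors. The affine change of variables $z_j=\tfrac12+\tfrac12 w_j$ preserves the degree in each variable. In one variable, $\{\beta_k\}_{k=0}^4$ is a basis of the polynomials of degree at most $4$: there are five of them, and they are linearly independent, since $\beta_k$ has a root of exact order $k$ at $t=0$, giving a triangular structure. Taking tensor products, $\{\beta_\alpha\}$ is a basis of the space of polynomials of degree at most $4$ in each variable. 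Components of lower degree, such as the cubic ones $\tilde f_0,\dots,\tilde f_3$, still lie in this space, so they expand in the same basis.

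Putting these together,
\[
\tilde f(z)=\sum_\alpha \beta_\alpha(z)\,C_\alpha
\]
with nonnegative weights summing to $1$, which is exactly membership in $\conv\{C_\alpha\}$. There is no genuine obstacle here. The only point needing care is that one shared weight vector $(\beta_\alpha(z))_\alpha$ serves all five coordinates simultaneously. This is what yields a convex combination of the vectors $C_\alpha$, rather than five separate componentwise bounds.
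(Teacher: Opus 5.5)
Your proof is correct and follows the same route as the paper, which simply notes that for $z\in[0,1]^6$ the weights $\beta_\alpha(z)$ are the coefficients of a convex combination, so $\tilde f(z)=\sum_\alpha \beta_\alpha(z)C_\alpha$. You additionally spell out what the paper takes for granted: the nonnegativity, the partition of unity via the binomial theorem, and the existence of a common tensor-product Bernstein expansion for all five components.
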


\begin{proof}
    This follows directly from the fact that, for $z \in [0,1]^6$, the values $\beta_{\alpha}(z)$ for $\alpha \in \{0,\dots,4\}^6$ are the coefficients of a convex combination.
\end{proof}

Therefore, to show that $f$ does not have a zero, it would be sufficient to prove that $0 \not\in \conv \{C_{\alpha}: \alpha \in \{0,\dots,4\}^6\}$.  By the standard convex separation theorem, this is equivalent to finding a vector $u \in \rr^5$ such that $\langle u, C_{\alpha}\rangle > 0$ for every $\alpha \in \{0,\dots,4\}^6$.

Finding such a vector $u$ is too ambitious.  However, an alternative approach is to subdivide $[0,1]^6$ into smaller axis-parallel boxes, reparametrize each of them as $[0,1]^6$, and apply the same idea.

The computer-assisted part of this manuscript gives a subdivision of $[0,1]^6$ into smaller axis-parallel boxes and a separation vector $u$ for each box showing that the five polynomials obtained from $A$ and $B$ do not have a common zero in that box.  This is described in more detail in the next section.

\begin{proof}[Proof of \cref{thm:vanishing-tool}]
    Provided a subdivision of $[0,1]^6$ with separating vectors as stated above, \cref{lem:separating-bernstein} shows that the five polynomials induced by $A$ and $B$ do not have a common zero.  This, in turn, corresponds to the map from \cref{thm:vanishing-tool} having no zero on $SO(4)$, which in turn implies it has no zeros on $O(4)$.
\end{proof}

\section{Computer-assisted component}\label{sec:final-computer-part}

Let us describe the algorithm used to find the subdivision of $[0,1]^6$.

The algorithm consists of the following:

\begin{enumerate}
    \item Start with the full hypercube $[0,1]^6$, set as the ``current box''.
    \item Compute the Bernstein vectors of $\tilde{f}$ in the current box.
    \item Try to find a vector $v \in \rr^5$ that has positive dot product with each Bernstein vector for $\tilde{f}$.
    \item If such a vector $v$ is found, mark the current box as ``completed''.
    \item If no vector $v$ is found, divide the current box in half along one coordinate and mark the two new boxes as ``remaining''.
    \item Set a remaining box as the current box and start the process again from step (2).
\end{enumerate}

Subdividing a box by half in one coordinate allows us to use the de Casteljau subdivision formulas to simplify the computation of the Bernstein vectors of the smaller boxes.  If the algorithm terminates, we have a certificate that $\tilde{f}$ has no simultaneous zeros.  The algorithm also imposes a tree structure on the subdivision of $[0,1]^6$, where each subdivision creates two nodes, one for each new box, which are the children of the current box.

A Python program was used to run the algorithm described above.  It created a tree with $77,713$ nodes and $38,857$ leaves.  See \cref{fig:subdivision} for an example of a slice of the subdivision of $[-1,1]^6$.

\begin{figure}
    \centering
    \includegraphics[width=0.5\linewidth]{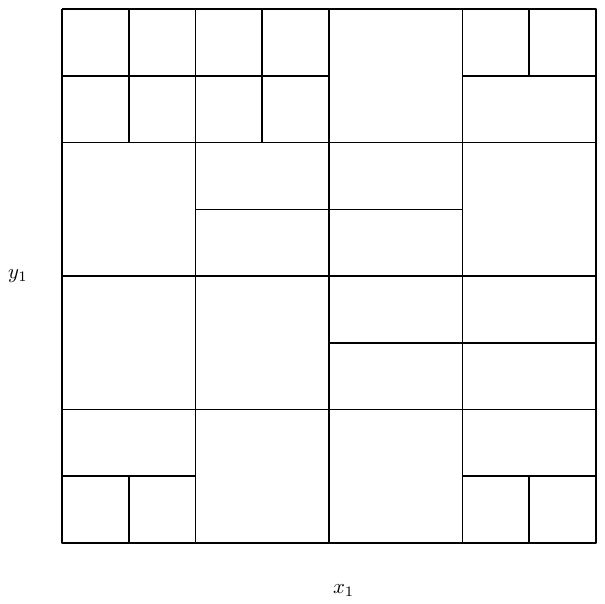}
    \caption{A 2-dimensional slice of the subdivision of $[-1,1]^6$.  This slice is obtained by fixing $x_2=-1/3$, $x_3=1/3$, $y_2 = -1/3$, $y_3=-1/3$.  The horizontal axis is $x_1$ and the vertical axis is $y_1$.}
    \label{fig:subdivision}
\end{figure}

We include in the accompanying repository:

\begin{itemize}
    \item A C program that gives the final subdivision of $[0,1]^6$, the vector $v$ needed for each final box.  The program reconstructs $f$, the Bernstein vectors for each box, and checks the dot product condition.
    \item The Python files that were used to find the subdivision, and additional information for reproducibility.
\end{itemize}

The supporting files are as follows:

\begin{center}
\small
\begin{tabular}{>{\raggedright\arraybackslash\scriptsize}p{0.49\textwidth}>{\raggedright\arraybackslash}p{0.41\textwidth}}
\toprule
File&Role\\
\midrule
\path{verify_grunbaum_tensors.c}&Self-contained exact checker for \cref{thm:vanishing-tool}; it reconstructs the polynomial system and Bernstein vectors from the tensors $B$ and $5A$ given in \cref{sec:computational} (multiplication by $5$ is only done to have integer coefficients) and it verifies the complete subdivision certificate using integer arithmetic.\\
\path{verification_manifest.md}&Hashes, compiler information, expected output, and inventory of the additional reproducibility files\\
\path{grunbaum_leaf_boxes.csv}&A CSV file with the coordinates of every final box and the vector used to separate the Bernstein vectors from the origin.  This file contains a subdivision of the hypercube $[0,1]^6$, after the first change of variables.\\
\bottomrule
\end{tabular}
\end{center}

The CSV file makes independent inspection, plotting, and reimplementation easier.  It lists all 38,857 terminal boxes explicitly.  For each coordinate, the entries $(k,d)$ represent the interval $\left[ \frac{k}{2^d}, \frac{k+1}{2^d}  \right]$.  A row describing a box will use the format

\texttt{leaf\_id,node\_id,depth,x1\_k,x1\_d,...,y3\_k,y3\_d,w0,w1,w2,w3,w4}

where \texttt{x1\_k, x1\_d} represent the interval in the $x_1$ coordinate as described above, and the vector $(w_0,\dots,w_4)$ used to separate the Bernstein vectors from the origin.

The repository also contains the Python programs used to find the subdivision and separators. These files are provided for reproducibility and independent comparison, but they are not needed to verify the certificate: the C checker is self-contained, and the CSV file provides the data without any need to trust the algorithm that produced the subdivision.

The program \texttt{verify\_grunbaum\_tensors.c} contains a Base64 encoding of the subdivision tree and leaf witnesses.  A second C file, \texttt{check\_grunbaum\_text.c}, performs the same test but reads the boxes and vectors information from \texttt{grunbaum\_certificate.txt}.

\vspace{0.5cm}

\begin{center}
\textbf{Repository: }\url{https://github.com/psoberon-math/grunbaum-tensor-verification}
\end{center}

\section{Remarks}

We note one of the main differences between Avis' earlier counterexamples and those presented in this manuscript.  The counterexamples by Avis focus on global information about the masses.  In other words, if we concentrate the mass around the moment curve, it is the intersection properties of the moment curve with hyperplanes that imply that the construction is indeed a counterexample.

In our construction, the example is constructed from a very small perturbation on a centered Gaussian mass.  Every equipartition of the centered Gaussian mass corresponds to an orthogonal family that contains the origin, and a small (but carefully chosen) perturbation near the origin makes a fix impossible.

The relation of equipartitions of small perturbations of the standard Gaussian mass with common zeros from some symmetric tensors works in general.  In particular, cases where Ramos' conjecture is known to hold might give interesting consequences about the existence of common zeros of polynomials.  We have not found cases with interesting applications, but there might be some instances worth exploring.  The construction seems tailor-made for the case $d=4$.  As a trivial example, a homogeneous polynomial of degree $5$ in $n \ge 5$ variables has multiple possible monomials which are odd in three of the variables but even in the rest.  This affects significantly the steps to characterize equipartitions with zeros from certain evaluations of a tensor.

The structure of the analytic proof is reminiscent of standard equivariant proofs of Borsuk–-Ulam-type theorems \cites{Barany1980, Musin2012}, where one first constructs a test map with a single orbit of zeros and then proves that this orbit cannot disappear under equivariant perturbation. Here \(H_0\) likewise has exactly one orbit of zeros under the signed-permutation action. The difference is that a well chosen perturbation can now remove the entire zero orbit. The same kind of highly symmetric map that is usually used to prove existence is used here to demonstrate nonexistence.

\subsection*{Disclosure of use of AI}

ChatGPT-5.6 Sol was used during the discovery of this result.  The initial counterexample strategy and the tensors used in \cref{sec:computational} were found during conversations with ChatGPT, the final form of the tensors used in \cref{sec:non-computational} were also improved with ChatGPT.  The proof was substantially changed and simplified by the author.  The author reviewed and executed the final verification software and wrote the manuscript.  The author has verified every mathematical claim and computational output used in the final proof and assumes full responsibility for the results presented.

\bibliography{refref}

\end{document}